\documentclass[a4paper,11pt, english]{article}
\usepackage[utf8x]{inputenc}
\usepackage{babel}
\usepackage[margin=1.25in]{geometry}
\usepackage{babel,amsmath,amssymb,amsthm,enumitem,setspace,authblk}
\usepackage[small,bf]{titlesec}
\titlelabel{\thetitle.\hspace{0.5em}}
\usepackage[colorlinks,allcolors=blue]{hyperref}
\usepackage{comment}

\numberwithin{equation}{section}

\newtheorem{theorem}[subsection]{Theorem}
\newtheorem{lemma}[subsection]{Lemma}

\newtheorem{proposition}[subsection]{Proposition}

\theoremstyle{definition}
\newtheorem{definition}[subsection]{Definition}
\newtheorem{remark}[subsection]{Remark}

\newcommand{\supp}{\text{supp}}

\begin{document}
\title{On a problem of optimal mixing}
\author[1,2]{K. O. Sokolov}%
\affil[1]{Lomonosov Moscow State University}
\affil[2]{"Vega" Institute}
\date{}

\onehalfspacing
\maketitle

\begin{abstract}

We consider the simultaneous optimal transportation of measures, where the target marginal is not necessarily fixed. For this problem, we prove the existence of a solution for completely regular spaces and investigate the structure of the discrete problem. We establish a connection between the Monge problem and the Kantorovich problem by showing that their functionals are equal and that the solutions coincide in Euclidean space.

\end{abstract}

\section{Introduction}

   Recently, various modifications to the optimal transportation problem have been actively researched (\cite{Beiglbck2016}, \cite{Bogachev2022}, \cite{2012BogachKoles}, \cite{Gozlan2017}). Recall the classical definition of the Monge-Kantorovich problem. We have two topological spaces, $X$ and $Y$, with two probability measures, $\mu$ and $\nu$. We also have a function $c$ that is defined on the product space $X\times Y$. This function is called the cost function.
   The goal is to minimize
    $$\int c(x,y) \; d\pi(dx,dy),
    $$
   where the infimum is taken by all measures $\pi\in\Pi(\mu,\nu)$ with fixed projections, that is, $\pi(A\times Y)=\mu(A)$ and $\pi(X\times B)=\nu(B)$ for all Borel $A$ and $B$.
 
    In this paper, we consider a modification of the classical optimal transport problem. Unlike the classical problem, we consider several probability measures $\{\mu_i\}_{i=1}^n$ on the space $X$ and we need to transform them into a single fixed or variable measure $\nu$ using a plan or mapping. 

    The problem of simultaneous optimal transport appeared in the literature earlier in the following formulation. Let
    \begin{equation}
			\Pi(\overline{\mu},\overline{\nu}) := \{ \pi \in \Pi(\mu,\nu) \; : \;  \int_X \pi^x(B) d\mu_i = \nu_i(B)\;  \forall\;  1\leq i\leq n, \; B\in\mathcal{B}(Y) \},
    \end{equation}
    where $\mu := \frac{\mu_1+\dots+\mu_n}{n}$ and $\nu := \frac{\nu_1+\dots+\nu_n}{n}$. 
    The goal is to minimize 
    \begin{equation}
            \inf_{\pi}\left\{ \int c(x,y) \; d\pi \; : \; \pi \in \Pi(\overline{\mu},\overline{\nu})\right\}
    \end{equation}

    Here, we need to transform each measure $\mu_i$ to $\nu_i$ using a transport plan $\pi$.
    
    In a short paper \cite{Wolansky2020}, the vector-valued Kantorovich problem is formulated for compact spaces, and a dual theorem is presented for this problem.

    In a recent paper \cite{Wang2022SimultaneousOT}, a dual theorem and a theorem on the equality of minima in the Monge and Kantorovich problems were proved. However, in order to ensure the existence of at least one suitable mapping for the measures $\{\mu_i\}_{i=1}^n$, an additional jointly atomless condition was imposed, which significantly restricts the choices of these measures. 

    In section 2, we formulate the problems, give a more constructive proof of Lyapunov's theorem, and prove the existence of solutions for problems \ref{kantorovich} and \ref{barycentrproblem}. In section 3, we consider a discrete problem and pay special attention to the problem with an unfixed target marginal. In section 4, we prove the theorem on the equality of the minimum and infimum in the Kantorovich and Monge problems, respectively. In section 5, we prove the connection between solutions to simultaneous transport problems for the statements of Monge and Kantorovich.


\section{Problem statements and the existence of solutions}

    The possibility of formulating the Monge problem of simultaneous optimal transport is provided by Lyapunov's theorem, which guarantees the existence of maps that transform several atomless measures into a fixed one. In the following lemma \ref{lemmaLyap}, a simpler and more constructive approach to constructing such a map is presented, which allows us to provide a specific example of this type of mapping on the real line.

    \begin{lemma}\label{lemmaLyap}
        Given an absolutely continuous measure $\nu$ on $[0, 1]$, there exists a Borel transformation T of $[0, 1]$ that preserves Lebesgue measure $\lambda$ and takes the measure $\nu$ to $\nu([0,1])\lambda$.
    \end{lemma}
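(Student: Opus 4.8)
The plan is to build $T$ as the coding map of a nested sequence of finite partitions of the domain $[0,1]$ that are simultaneously balanced for Lebesgue measure $\lambda$ and for $\nu$. Write $f$ for the density of $\nu$ with respect to $\lambda$ and set $m := \nu([0,1])$. For each $n$ I would produce a partition $\{E_w\}_{w\in\{0,1\}^n}$ of $[0,1]$ into $2^n$ Borel sets, indexed by binary words of length $n$, such that $\lambda(E_w)=2^{-n}$ and $\nu(E_w)=m\,2^{-n}$ for every $w$, and such that the partition at level $n+1$ refines the one at level $n$ via $E_w=E_{w0}\sqcup E_{w1}$. Granting such a family, I define $T(x)=\sum_{k\ge 1} w_k 2^{-k}$, where $w_1w_2\cdots$ is the address of $x$, i.e.\ the unique infinite word with $x\in E_{w_1\cdots w_n}$ for all $n$.

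The heart of the argument is the inductive step: given a node $E_w$ with $\lambda(E_w)=2^{-n}$ and $\nu(E_w)=m2^{-n}$, split it into $E_{w0},E_{w1}$ of equal $\lambda$-measure and equal $\nu$-measure. Keeping every $E_w$ a finite union of intervals, I would first cut $E_w$ into a ``left'' and ``right'' part of equal $\lambda$-measure $2^{-(n+1)}$ by a single threshold; if these already carry equal $\nu$-mass we are done, and otherwise I would equalize the $\nu$-mass by continuously swapping sub-intervals of equal $\lambda$-length between the two parts. Because $\nu\ll\lambda$, the $\nu$-mass transferred by such a swap depends continuously on the swap parameter, so the intermediate value theorem yields an exact balanced split; this is the concrete substitute for Lyapunov's theorem that makes the construction explicit. (The base case $n=0$ is $E_\varnothing=[0,1]$.)

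It then remains to check that $T$ has the desired pushforwards. For a dyadic interval $J_w=[\,0.w,\;0.w+2^{-n})$ with $w\in\{0,1\}^n$ one has $T^{-1}(J_w)=E_w$ up to a $\lambda$-null and $\nu$-null set (the only ambiguity comes from the dyadic rationals, whose preimage is countable and hence null for both measures since $\nu\ll\lambda$). Consequently $\lambda(T^{-1}(J_w))=2^{-n}=\lambda(J_w)$ and $\nu(T^{-1}(J_w))=m2^{-n}=m\,\lambda(J_w)$. Since the dyadic intervals generate $\mathcal B([0,1])$ and are stable under intersection, the two identities extend to all Borel sets, giving $T_\#\lambda=\lambda$ and $T_\#\nu=\nu([0,1])\,\lambda$ at once, which is the assertion. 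Borel measurability of $T$ follows because each level-$n$ address coordinate is a Borel function of $x$.

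The step I expect to be the main obstacle is the exact balanced bisection: producing a subset of $E_w$ that halves $\lambda$ and $\nu$ simultaneously rather than only approximately. The approximate version (taking the left $\lambda$-half of each interval in a fine grid) is immediate, but upgrading it to an exact equality for a fixed set is precisely where absolute continuity of $\nu$ must be used, through the continuity of $s\mapsto \nu(\text{swapped region})$ and the intermediate value theorem; careful bookkeeping is also needed to ensure the sets remain finite unions of intervals so that the construction stays genuinely constructive.
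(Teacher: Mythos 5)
Your argument is correct and takes a genuinely different route from the paper's. You prove the lemma by the dyadic refinement scheme: nested partitions $\{E_w\}_{w\in\{0,1\}^n}$ that simultaneously halve $\lambda$ and $\nu$ at every level, followed by the binary coding map; the only real work is the exact balanced bisection, which you correctly reduce to the intermediate value theorem (a clean implementation is a sliding window $E_{w}\cap[u(s),v(s)]$ of constant $\lambda$-mass $2^{-(n+1)}$ interpolating between the left and right $\lambda$-halves of $E_w$; the function $s\mapsto\nu(E_w\cap[u(s),v(s)])$ is continuous because $\nu\ll\lambda$ and takes values on both sides of $m2^{-(n+1)}$ at the endpoints). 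The paper instead works with the signed measure $\eta=\nu-\lambda$: it splits $[0,1]$ into the sets $A^{+},A^{-},A^{0}$ on which the density of $\eta$ is positive, negative, or zero, and defines $T$ in one shot by collapsing, for each level $t$, the points of $A^{+}$ and $A^{-}$ at which the cumulative masses of $\eta^{+}$ and $\eta^{-}$ both equal $t$ to a common image point, so that the local excess of $\nu$ over $\lambda$ cancels the matching deficit, while $A^{0}$ is transported rigidly. The paper's construction yields a closed-form map (which is the point of the lemma there: to exhibit an explicit candidate mapping on the line), but it is stated without verification and implicitly assumes $\nu([0,1])=1$ so that $\eta^{+}$ and $\eta^{-}$ have equal total mass; your construction defines $T$ only as a limit, but comes with a complete check of both pushforward identities and generalizes verbatim to any finite family of measures absolutely continuous with respect to $\lambda$, i.e., it essentially reproves the Lyapunov-type theorem the paper invokes later. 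One small correction: the preimage under $T$ of the set of dyadic rationals need not be countable---the preimage of a single dyadic point is a union of at most two sets of the form $\bigcap_n E_{w_1\cdots w_n}$, which can be uncountable---but each such intersection has $\lambda$- and $\nu$-measure $0$, so your conclusion that the ambiguity set is null for both measures still stands.
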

    \begin{proof}
        Consider the signed measure $\eta := \nu - \lambda$ with density $\rho_\eta$ and define the sets $$A^+:=\rho_\eta^{-1}((0,+\infty)), \quad A^- := \rho_\eta^{-1}((-\infty, 0)) \quad A^0:=[0,1]\setminus (A^+\cup A^-).$$
        The restriction of the measure $\eta$ to the sets $A^+$ and $A^-$ is denoted as $\eta^+$ and $\eta^-$, respectively. 
    
        Let 
        $$
        B^+(t) := \{ x\in A^+ : \eta^+([0,x]\cap A^+) = t\}
        $$ 
        and
        $$B^-(t) := \{ x\in A^- : \eta^-([0,x]\cap A^-) = t\}.$$
        For any $r\in[0,1-\lambda(A^0)]$ we define  
        $$ C(r) := \left\{B^+(t)\cup B^-(t) \; : \; \lambda\left(\bigcup_{s\in[0,t]} (B^+(s)\cup B^-(s)\right) = r\right\}.
        $$
        Now, let us define the desired mapping
        $$T(x)=
            \begin{cases}
             r, \quad x\in C(r),
            \\
            \lambda(A^+\cup A^-)+\lambda([0,x]\cap A^0), \quad x\in A^0.
            \end{cases}
        $$
        \end{proof}
        \begin{remark}
        The proof provides an explicit construction, which, in the case of $n=2$ and for a quadratic function on a straight line, gives a reasonable option for an optimal mapping that is similar to the classical solution to the optimal transport problem on a straight line. However, as shown below, this mapping is not necessarily optimal. 
        \end{remark}

        Let us formulate Lyapunov's theorem, the proof of which can be found in \cite{Bogachev2007}.

    \begin{theorem}[Lyapunov]
        Let $\mu_1,\dots,\mu_n$ be atomless Borel probability measures on a Souslin space $X$. Then, for every Borel probability measure $\nu$ on $X$ there exists a Borel transformation $T: X\to X$ such that $\mu_i\circ T^{-1}=\nu$ for all $1\leq i\leq n$.  
    \end{theorem}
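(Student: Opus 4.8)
The plan is to reduce the statement, at each scale, to a finite partition-matching problem that is solved by the convexity form of Lyapunov's theorem, and then to assemble the matchings into a single Borel map by a limiting procedure. The engine is the following partition corollary of the classical convexity theorem: if $\mu_1,\dots,\mu_n$ are atomless finite measures on a measurable space $E$ and $\lambda_1,\dots,\lambda_m\ge 0$ satisfy $\sum_j\lambda_j=1$, then there is a measurable partition $E=\bigsqcup_{j=1}^m E_j$ with $\mu_i(E_j)=\lambda_j\,\mu_i(E)$ for all $i,j$ \emph{simultaneously}. This follows from the fact that the range $\{(\mu_1(A),\dots,\mu_n(A)):A\subseteq E\}$ is convex: the point $\lambda_1(\mu_1(E),\dots,\mu_n(E))$ lies on the segment joining $0$ to $(\mu_1(E),\dots,\mu_n(E))$, hence is attained by some $E_1$, and one recurses on $E\setminus E_1$ with renormalized weights.

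Next I would reduce the target to $[0,1]$. Since $X$ is Souslin, its Borel structure is standard, so there is a Borel isomorphism $g$ of the target copy of $X$ onto a Borel set $Z\subseteq[0,1]$; put $\tilde\nu:=g_*\nu$. It suffices to build a Borel map $S:X\to Z$ with $\mu_i\circ S^{-1}=\tilde\nu$ for every $i$, since then $T:=g^{-1}\circ S$ is Borel and $\mu_i\circ T^{-1}=(g^{-1})_*\tilde\nu=\nu$. Working in $[0,1]$, I use the half-open dyadic partitions $\mathcal P_k=\{[j2^{-k},(j+1)2^{-k}):0\le j<2^k\}$, which are nested, generate $\mathcal B([0,1])$, and have atoms of length $2^{-k}\to 0$.

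The core is an inductive construction on the source. At level $k$ I produce a Borel partition $\{E^{(k)}_B\}_{B\in\mathcal P_k}$ of $X$ with $\mu_i(E^{(k)}_B)=\tilde\nu(B)$ for all $i$ and $B$, refining the previous level compatibly with $\mathcal P_{k+1}\preceq\mathcal P_k$. Starting from $E^{(0)}=X$, given $E^{(k)}_B$ with $\mu_i(E^{(k)}_B)=\tilde\nu(B)$ for all $i$, I split $B$ into its two dyadic children $B',B''$ and apply the corollary to the restrictions $\{\mu_i|_{E^{(k)}_B}\}_i$ (atomless, all of common mass $\tilde\nu(B)$) with weights $\tilde\nu(B')/\tilde\nu(B),\ \tilde\nu(B'')/\tilde\nu(B)$, obtaining $E^{(k)}_B=E^{(k+1)}_{B'}\sqcup E^{(k+1)}_{B''}$ with $\mu_i(E^{(k+1)}_{B'})=\tilde\nu(B')$ and $\mu_i(E^{(k+1)}_{B''})=\tilde\nu(B'')$; pieces of zero $\tilde\nu$-mass are split arbitrarily. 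I then define $S(x)$ to be the unique point of $\bigcap_k B_k(x)$, where $B_k(x)\in\mathcal P_k$ is the atom with $x\in E^{(k)}_{B_k(x)}$: the intersection is a single point because the $B_k(x)$ are nested half-open dyadic intervals of vanishing length. Since $S^{-1}(B)=E^{(k)}_B$ for each $B\in\mathcal P_k$, the map $S$ is Borel, and $\mu_i\circ S^{-1}$ agrees with $\tilde\nu$ on the generating algebra of finite unions of dyadic intervals, hence on all of $\mathcal B([0,1])$ by uniqueness of measures.

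The hard part is the interface between the measure-theoretic source construction and the topological realization of the limit: Lyapunov's corollary delivers sets but no map, and turning a coherent tower of partitions into an honest point-valued Borel map requires the target to carry a standard Borel (metric) structure whose partitions shrink to points — this is precisely why I first transport the target onto $Z\subseteq[0,1]$, whereas the source, bearing $n$ distinct measures, cannot be standardized and is handled purely through convexity. Secondary technical points, all routine once the scheme is set up, are the bookkeeping that the restricted measures on each piece share the common mass $\tilde\nu(B)$ so that one weight vector serves all $i$ at once, the harmless treatment of atoms $B$ with $\tilde\nu(B)=0$, and the observation that no atomlessness of $\nu$ is needed, since a dyadic interval containing a $\nu$-atom simply retains positive mass that is delivered to that point in the limit.
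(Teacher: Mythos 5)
The paper does not actually prove this theorem; it cites Bogachev's \emph{Measure Theory}, and your argument is essentially a reconstruction of that textbook proof (Lyapunov convexity, then nested dyadic partitions of the source, then a limit map realized in $[0,1]$). The strategy is sound, but there is one genuine error in your reduction: a Souslin space does \emph{not} in general have standard Borel structure. An analytic non-Borel set $A\subseteq[0,1]$ with the induced topology is a Souslin space, and it admits no Borel isomorphism onto a Borel subset of $[0,1]$ (by Lusin--Souslin such an image would force $A$ to be Borel). So the isomorphism $g$, and in particular the Borel inverse $g^{-1}$ on which your definition $T=g^{-1}\circ S$ rests, need not exist as claimed. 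The repair is standard but is a missing step in your write-up: every Borel measure on a Souslin space is Radon, hence concentrated on a countable union of metrizable compacta; this $\sigma$-compact carrier $X_0$ is an $F_\sigma$ (hence Borel) subset of $X$ which \emph{is} standard Borel, and $g$ should be a Borel isomorphism of $X_0$ (not of $X$) onto a Borel set $Z\subseteq[0,1]$, with $T$ taking values in $X_0\subseteq X$. Equivalently, factor as in the cited source: first push all the $\mu_i$ to Lebesgue measure on $[0,1]$ by your partition tower (with target $[0,1]$ itself, where no realization problem arises), then compose with a Borel map $[0,1]\to X$ taking Lebesgue measure to $\nu$, whose existence is exactly the Radon--Souslin parametrization theorem.

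A secondary but real point concerns the limit map. The intersection $\bigcap_k B_k(x)$ of nested half-open dyadic intervals can be \emph{empty} (always choosing the right child gives $\bigcap_k[1-2^{-k},1)=\varnothing$), so $S(x)$ must be defined as the unique point of $\bigcap_k\overline{B_k(x)}$; then $S^{-1}(B)=E^{(k)}_B$ fails on the set of points sent to dyadic rationals, and the identity $\mu_i\circ S^{-1}=\tilde\nu$ should instead be verified on the half-lines $[0,t]$ with $t$ dyadic, using continuity from above (it does hold, including when $\nu$ has atoms). Moreover, even after the repair above, the limit point $S(x)$ need not lie in $Z$; you must note that $S^{-1}([0,1]\setminus Z)$ is $\mu_i$-null for every $i$ (since $Z$ is Borel with $\tilde\nu(Z)=1$) and redefine $S$ there to be some fixed point of $Z$. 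With these corrections the argument is complete; your source-side machinery (the partition corollary of Lyapunov's convexity theorem and the inductive refinement) is correct as stated.
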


    After we have established that there is a mapping for any finite set of measures that transform them into a fixed one, the next natural question is about what the optimal mapping would look like in terms of minimizing some functional. The problem in Monge's formulation aims to answer this question. 
    
    Let $\mu_1,..., \mu_n$ and $\nu$ be atomless Borel probability measures on topological spaces  $X$ and $Y$ respectively, let $c: X\times Y \to [0,+\infty]$ be lower semi-continuous. Let us define the set of functions 
			$${\mathcal{T}(\overline{\mu},\nu)=\{ T: X\to Y \;|\; \mu_i \circ T^{-1} =\nu, \;\; 1\leq i\leq n\}}.$$ The goal is to find
			\begin{equation} \label{Monge}
				\inf \left\{ \sum_{i=1}^n\int_{X}c(x,T(x))\,d\mu_i : T\in\mathcal{T}(\overline{\mu},\nu) \right\}.
			\end{equation}
    

    The solution to the Monge problem may not always exist, even in its classical formulation. Therefore, we will explore the Kantorovich problem and examine the relationship between these two problems. 

    Let $\mu_1,..., \mu_n$ and $\nu$ be atomless Borel probability measures on topological spaces  $X$ and $Y$ respectively, let $c: X\times Y \to [0,+\infty]$ be lower semi-continuous.  Let us define the set
        \begin{equation}\label{plansfixmarg}
			\Pi(\overrightarrow{\mu},\nu) := \{ \pi \in \Pi(\mu,\nu) \; : \;  \int_X \pi^x(B) d\mu_i = \nu(B)\;  \forall\;  1\leq i\leq n, \; B\in\mathcal{B}(Y) \},
	\end{equation}
    where $\mu := \frac{\mu_1+\dots+\mu_n}{n}$. The goal is to find
        \begin{equation}\label{kantorovich}
            \inf_{\pi}\left\{ \int c(x,y) \; d\pi \; : \; \pi \in \Pi(\overrightarrow{\mu},\nu)\right\}
        \end{equation} and optimal transport plan.

    \begin{remark}
        It's easy to see that the set $\Pi(\overrightarrow{\mu},\nu)$ is not empty. 
        In addition to Lyapunov's theorem guaranteeing this, it also follows from if we take $\pi^x \equiv \nu$ for all $x$, then such a $\pi = \nu \otimes\mu$ would belong to the set $\Pi(\overrightarrow{\mu},\nu)$.
    \end{remark}

    \begin{remark}\label{linearconst}
        Note that this problem can be rephrased as a problem with linear constraints \cite{Zaev2015OnTM}.

        \begin{equation}
 			\inf\left\{  \int c(x,y)\; d\pi : \; \pi\in\Pi(\mu,\nu), \int f(x,y)\;d\pi =0, \;\; \forall f\in\mathcal{F} \right\},
 		\end{equation}
 		where
 		\begin{equation}
 			\mathcal{F}= \left\{ \psi(y)\cdot\left(\frac{d\mu_i}{d\mu}(x)-\frac{d\mu_j}{d\mu}(x)\right):\;\ i,j\in\{1, ... ,n\},\;\; \psi(y)\in C(Y)\right\}
 		\end{equation}

   Or simply, 
   \begin{equation}\label{conditionofproblem}
       \int \frac{d\mu_i}{d\mu}\; d\pi^y = 1, \quad \nu\text{-a.s.}
    \end{equation}
    \end{remark}

    \begin{theorem}
        Let $X$ and $Y$ be completely regular topological spaces, let $\mu_1,\dots,\mu_n,\nu$ be Radon measures, let cost function  $c:X\times Y\to [0,+\infty)$ be lower semi-continuous, let $\frac{d\mu_i}{d\mu}$ be continuous $\mu$-a.s., then infimum in (\ref{kantorovich}) is attained, that is, there is optimal plan $\pi\in\Pi(\overrightarrow{\mu},\nu)$.
    \end{theorem}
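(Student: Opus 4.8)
The plan is to run the direct method of the calculus of variations: I will exhibit $\Pi(\overrightarrow{\mu},\nu)$ as a weakly compact set of Radon measures and $\pi\mapsto\int c\,d\pi$ as a weakly lower semicontinuous functional, so that a minimizing sequence admits a weakly convergent subnet whose limit is optimal. Nonemptiness is already recorded in the remark following (\ref{kantorovich}), since $\nu\otimes\mu$ belongs to the set. First I would establish relative weak compactness of the ambient set $\Pi(\mu,\nu)$: as $\mu$ and $\nu$ are Radon they are tight, and choosing compact $K_X\subset X$, $K_Y\subset Y$ with $\mu(X\setminus K_X)<\varepsilon/2$ and $\nu(Y\setminus K_Y)<\varepsilon/2$ gives $\pi((X\times Y)\setminus(K_X\times K_Y))<\varepsilon$ for every $\pi\in\Pi(\mu,\nu)$; thus the family is uniformly tight, and the Prokhorov theorem for completely regular spaces yields relative weak compactness. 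The marginal constraints are preserved under weak limits (test against $C_b(X)$ and $C_b(Y)$), so $\Pi(\mu,\nu)$ is weakly closed, hence weakly compact.

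For lower semicontinuity of the cost, I would use that $c\ge 0$ is lower semicontinuous on a completely regular space, hence the pointwise supremum of the upward-directed family of $g\in C_b(X\times Y)$ with $0\le g\le c$; therefore $\int c\,d\pi=\sup_g\int g\,d\pi$ is a supremum of weakly continuous functionals and so is weakly lower semicontinuous.

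The heart of the argument, and the step I expect to be the main obstacle, is the weak closedness of the extra constraints, which is exactly where the hypothesis on the densities enters. Using the reformulation of Remark \ref{linearconst} and disintegrating $\pi$ along its first marginal $\mu$, with $h_i:=d\mu_i/d\mu$, the condition $\int_X\pi^x(B)\,d\mu_i=\nu(B)$ is equivalent to $\int\psi(y)\,h_i(x)\,d\pi=\int\psi(y)\,d\pi$ for all $\psi\in C_b(Y)$ and all $i$. Two features make these stable under weak limits: the densities are bounded, $0\le h_i\le n$ because $\mu_i\le n\mu$; and, by hypothesis, each $h_i$ is continuous $\mu$-a.e., so the integrand $f(x,y):=\psi(y)h_i(x)$ is bounded and $\pi$-a.e. continuous for every $\pi\in\Pi(\mu,\nu)$, its discontinuity set lying in $D_{h_i}\times Y$ of $\pi$-measure $\mu(D_{h_i})=0$. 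For such an $f$ I would compare with its semicontinuous envelopes $f_*\le f\le f^*$: the portmanteau inequalities give $\int f_*\,d\pi\le\liminf\int f_*\,d\pi_\alpha\le\liminf\int f\,d\pi_\alpha$ and dually $\limsup\int f\,d\pi_\alpha\le\int f^*\,d\pi$, while $\int f_*\,d\pi=\int f\,d\pi=\int f^*\,d\pi$ (equality off $D_{h_i}\times Y$). Hence $\int f\,d\pi_\alpha\to\int f\,d\pi$, each constraint functional is weakly continuous on $\Pi(\mu,\nu)$, and $\Pi(\overrightarrow{\mu},\nu)$ is a weakly closed subset of the weakly compact set $\Pi(\mu,\nu)$, thus itself weakly compact.

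Finally I would take a minimizing sequence $\pi_k\in\Pi(\overrightarrow{\mu},\nu)$; by compactness some subnet converges weakly to a limit $\pi^*\in\Pi(\overrightarrow{\mu},\nu)$, and lower semicontinuity gives $\int c\,d\pi^*\le\liminf\int c\,d\pi_k$, which equals the infimum in (\ref{kantorovich}), so $\pi^*$ is optimal. The delicate points to keep in view are that the weak topology need not be metrizable, so one argues with subnets rather than subsequences; that the generalized Prokhorov theorem and the portmanteau inequalities must be invoked in their completely-regular, Radon-measure forms; and that the $\mu$-a.e. continuity of the densities is precisely what lets the only-measurable constraint integrands survive the passage to the weak limit.
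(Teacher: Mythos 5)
Your proposal is correct and follows essentially the same route as the paper: uniform tightness plus Prokhorov for weak compactness of $\Pi(\overrightarrow{\mu},\nu)$, weak closedness of the density constraints via the $\mu$-a.e.\ continuity and boundedness of $\frac{d\mu_i}{d\mu}$, and weak lower semicontinuity of $\pi\mapsto\int c\,d\pi$. The only difference is that you prove by hand (via semicontinuous envelopes and the portmanteau inequalities) the convergence $\frac{d\mu_i}{d\mu}\pi_\alpha\to\frac{d\mu_i}{d\mu}\pi$ that the paper obtains by citing Proposition 4.3.17 of Bogachev's \emph{Weak Convergence of Measures}, and you are more careful about subnets versus sequences in the non-metrizable setting.
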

    \begin{proof}
        Let us now prove that the set $\Pi(\overrightarrow{\mu},\nu)$ is compact in the weak topology.
        To do this, note that $\Pi(\overrightarrow{\mu},\nu)$, among other things, has fixed projections, which means, as is well known, it is uniformly tight.

        Let us show that $\Pi(\overrightarrow{\mu},\nu)$ is closed.
        Let  $\pi_n\in \Pi(\overrightarrow{\mu},\nu)$  be sequence of measures converges to some $\pi$. To do this, we can use proposition 4.3.17 \cite{Bogachev2018}, which means that $\frac{d\mu_i}{d\mu}\pi_n$ converges weakly to $\frac{d\mu_i}{d\mu}\pi$. Hence projection of $\frac{d\mu_i}{d\mu}\pi$ on $Y$ is equal to $\nu$. Therefore, by Prokhorov's theorem $\Pi(\overrightarrow{\mu},\nu)$ is compact in weak topology.

        Considering that for a semi-continuous function $c(x, y)$, the functional $\pi \mapsto \int c d\pi$ is also semi-continuous, we can conclude that the infimum in problem (\ref{kantorovich}) is attained.
    \end{proof}

    Assuming that the functions $\frac{d\mu_i}{d\mu}$ are continuous, we can derive a dual theorem. The proof of this theorem is presented in \cite{Wang2022SimultaneousOT}.

    \begin{theorem}
		Let $X$, $Y$ be Polish spaces, let $\mu_1,\dots,\mu_n$ and $\nu$ be atomless probability measures on the corresponding spaces, and let $c:X\times Y \to [0,\infty)$ be lower semi-continuous function, then
		$$ \inf\limits_{\pi\in	\Pi(\overrightarrow{\mu},{\nu})} K_c(\pi):= \int\limits_{X\times Y}c(x,y)\; d\pi = \sup\limits_{(\overrightarrow\phi,\psi)\in J(\overrightarrow{\mu},{\nu})} \int_X\phi(x) d{\mu} +  \int_Y\psi(y) d{\nu}, $$
		where 
		$$ J(\overrightarrow{\mu},{\nu}) = \{\phi\in C(X), \overrightarrow\psi\in C^n(Y): \phi(x)+ \frac{d\overrightarrow\mu}{d\mu}\cdot\overrightarrow\psi(y)\leq c(x,y) \}, \quad \psi(y):= \sum_i \psi_i(y).$$
    \end{theorem}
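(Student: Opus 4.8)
The plan is to prove the two inequalities separately, following the scheme of Kantorovich duality adapted to the linear constraints identified in Remark \ref{linearconst}. Throughout I write $r_i := \frac{d\mu_i}{d\mu}$, noting that each $r_i \in C(X)$ by hypothesis, that $0 \le r_i \le n$ (since $n\mu = \mu_1+\dots+\mu_n \ge \mu_i$), and that $\sum_i r_i \equiv n$; these bounds are what keep the admissible potentials inside a space of bounded continuous functions.

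First I would dispatch the easy inequality $\sup \le \inf$. Given $\pi \in \Pi(\overrightarrow{\mu},\nu)$ and $(\phi,\overrightarrow{\psi}) \in J(\overrightarrow{\mu},\nu)$, the defining constraint $\int_X \pi^x \, d\mu_i = \nu$ lets me rewrite $\int_Y \psi_i \, d\nu = \int_{X\times Y} r_i(x)\psi_i(y) \, d\pi$, while $\int_X \phi \, d\mu = \int_{X\times Y}\phi \, d\pi$ because $\mu$ is the first marginal of $\pi$. Summing over $i$ and invoking the pointwise constraint $\phi(x) + \sum_i r_i(x)\psi_i(y) \le c(x,y)$ yields $\int_X \phi \, d\mu + \int_Y \psi \, d\nu \le \int c \, d\pi$, and taking suprema and infima gives $\sup \le \inf$.

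The substance is the reverse inequality, for which I would use Fenchel--Rockafellar duality on $E = C_b(X\times Y)$. Set $\Theta(u)=0$ if $u \ge -c$ and $+\infty$ otherwise, and let $\Xi(u) = \int_X \phi \, d\mu + \sum_i \int_Y \psi_i \, d\nu$ whenever $u$ has the special form $u(x,y) = \phi(x) + \sum_i r_i(x)\psi_i(y)$ with $\phi,\psi_i \in C_b$, and $+\infty$ otherwise. Here continuity of the $r_i$ is exactly what guarantees these admissible $u$ lie in $E$. After the sign change $\phi\mapsto-\phi$, $\psi_i\mapsto-\psi_i$, the quantity $\inf_u[\Theta(u)+\Xi(u)]$ equals the negative of the dual value. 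Computing the conjugates, $\Theta^*$ confines the dual variable $\ell\in E^*$ to a nonnegative functional whose representing measure $\pi$ makes $-\Theta^*(-\ell)$ reproduce $\int c\, d\pi$, while $\Xi^*(\ell)$ is finite (and then zero) precisely when $\pi$ has first marginal $\mu$ and satisfies $\int_X \pi^x \, d\mu_i = \nu$ for every $i$ -- that is, when $\pi \in \Pi(\overrightarrow{\mu},\nu)$. The Fenchel--Rockafellar identity then equates the dual supremum with the primal infimum $\inf_\pi \int c\, d\pi$.

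The main obstacle is the non-compactness of $X,Y$ together with the possible unboundedness of the lower semicontinuous cost $c$, which blocks a direct identification of $E^*$ with Radon measures and the immediate application of Fenchel--Rockafellar. I would resolve this by a two-step approximation: first prove the identity for bounded continuous $c$ on compact $X,Y$, where $C(X\times Y)^*$ is the space of signed Radon measures and the conjugate computations above are rigorous; then extend to Polish $X,Y$ using that $\Pi(\overrightarrow{\mu},\nu)$ is uniformly tight (its elements carry the fixed marginals $\mu$ and $\nu$), so the feasible set is weakly compact and mass escaping to infinity is controlled; finally approximate a general nonnegative l.s.c.\ $c$ from below by an increasing sequence $c_k\in C_b$, passing to the limit via monotone convergence on the primal side and lower semicontinuity of $\pi\mapsto\int c\, d\pi$, then matching the two monotone limits. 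Alternatively, since Remark \ref{linearconst} recasts the problem as an optimal transport problem with the linear constraints $\int f\, d\pi = 0$ for $f\in\mathcal{F}$, one may instead invoke the general duality for linearly constrained transport of \cite{Zaev2015OnTM} and merely verify that its abstract dual functional collapses to the supremum over $J(\overrightarrow{\mu},\nu)$ stated here.
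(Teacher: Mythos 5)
The paper does not actually prove this theorem: the text preceding it says the proof ``is presented in \cite{Wang2022SimultaneousOT}'' and moves on, so there is no in-paper argument to compare yours against. Your plan is the standard Fenchel--Rockafellar route to Kantorovich duality adapted to the extra linear constraints, which is essentially the strategy of the cited reference, and your easy direction ($\sup\le\inf$) is correct and complete: the identity $\int_Y\psi_i\,d\nu=\int r_i(x)\psi_i(y)\,d\pi$ is exactly the defining constraint of $\Pi(\overrightarrow{\mu},\nu)$. You also correctly notice that continuity of the densities $r_i=\frac{d\mu_i}{d\mu}$ is not in the theorem statement but is assumed in the surrounding text and is genuinely needed for the admissible potentials $\phi(x)+\sum_i r_i(x)\psi_i(y)$ to live in $C_b(X\times Y)$; the bounds $0\le r_i\le n$, $\sum_i r_i\equiv n$ are the right observations.

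That said, what you have is an architecture, not a proof, and two steps deserve to be flagged as genuine gaps rather than routine bookkeeping. First, your functional $\Xi$ is defined on functions $u$ of the form $\phi(x)+\sum_i r_i(x)\psi_i(y)$, but this representation is not unique (e.g.\ any $\eta_i$ with $\sum_i r_i(x)\eta_i(y)\equiv 0$ can be added), so you must either check that $\int\phi\,d\mu+\sum_i\int\psi_i\,d\nu$ is independent of the representation or define $\Xi$ as an infimum over representations; the constant case works out because $\int r_i\,d\mu=1$, but the general case needs an argument. Second, the passage from compact $X,Y$ to Polish spaces is the hard analytic content here and your sketch of it is thinner than for the classical problem: when you exhaust by compacta you must restrict and renormalize not just $\mu$ and $\nu$ but the whole vector of constraints $\int_X\pi^x\,d\mu_i=\nu$, and the densities $r_i$ of the restricted measures with respect to the restricted $\mu$ change, so the feasible set and the dual class $J(\overrightarrow{\mu},\nu)$ do not restrict in the obvious way. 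Your fallback of invoking the duality for linearly constrained transport from \cite{Zaev2015OnTM} via Remark \ref{linearconst} is arguably the cleaner path, but then the real work is the verification you defer to one clause: that Zaev's dual class, built from $\phi(x)+\psi(y)+\sum\psi_{ij}(y)\left(r_i(x)-r_j(x)\right)$, coincides with $J(\overrightarrow{\mu},\nu)$ (it does, using $\sum_i r_i\equiv n$ to absorb $\psi(y)$ into the vector part), and that Zaev's hypotheses on the constraint class $\mathcal{F}$ are met. As it stands the proposal is a credible and correctly oriented plan, but the theorem is not yet proved by it.
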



    Of particular interest is the situation where the marginal $\nu$ is not fixed. Specifically, let us consider a set of atomless Borel probability measures $\mu_1, \dots, \mu_n$ defined on the Euclidean space $\mathbb{R}^d$. The problem is to find  
        \begin{equation}\label{barycentrproblem}
            \inf_{\pi}\left\{ \int ||x-y||^2 \; d\pi \; : \; \pi \in \Pi(\overrightarrow{\mu})\right\},
        \end{equation}
        where $\Pi(\overrightarrow{\mu})$ is the set of plans   $\pi$ with fixed first marginal $\mu$, and the equality is true
        $$
            \int \pi^x(B) d\mu_i =\int \pi^x(B) d\mu_j, \text{ for any }B\in\mathcal{B}(\mathbb{R}^d), \; i,j\in\{1,\dots,n\}
        $$
        Note that if the supports of the measures $\mu_i$ are disjoint, then this problem is equivalent to finding the barycanter of the aforementioned measures. We will focus on the case where the measures $\mu_i$ share a common support.
    \begin{proposition}
        The infimum in problem \ref{barycentrproblem} is attained, where $\frac{d\mu_i}{d\mu}$ are continuous $\mu$-a.s. 
    \end{proposition}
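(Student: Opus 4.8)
The plan is to run the direct method of the calculus of variations, exactly as in the proof of the fixed-marginal theorem above, the only genuinely new issue being that the feasible set $\Pi(\overrightarrow{\mu})$ no longer has a prescribed second projection and hence is not automatically tight. First I would record that $\Pi(\overrightarrow{\mu})$ is nonempty and that we may assume the infimum is finite: for any compactly supported probability measure $\nu$ on $\mathbb{R}^d$ the product plan $\pi=\mu\otimes\nu$ has $\pi^x\equiv\nu$, so $\int\pi^x(B)\,d\mu_i=\nu(B)$ for every $i$ and it is feasible, with finite quadratic cost whenever $\mu$ has finite second moment; and if the infimum is $+\infty$ there is nothing to prove. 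Fix then a minimizing sequence $\pi_n\in\Pi(\overrightarrow{\mu})$ with $\sup_n\int\|x-y\|^2\,d\pi_n=:M<\infty$.

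The main obstacle is tightness of $\{\pi_n\}$, and this is precisely where the coercivity of the quadratic cost must be used. Each $\pi_n$ has first marginal $\mu$, which, being Radon on $\mathbb{R}^d$, is tight, so given $\varepsilon>0$ I would choose a compact $K\subset\mathbb{R}^d$ with $\mu(\mathbb{R}^d\setminus K)<\varepsilon/2$. By Chebyshev's inequality and the uniform cost bound, $\pi_n(\{\|x-y\|>R\})\le M/R^2<\varepsilon/2$ for $R$ large, uniformly in $n$. The set $S=\{(x,y):x\in K,\ \|x-y\|\le R\}$ is closed and contained in the compact set $K\times(K+\overline{B}_R)$, hence compact, and $\pi_n(\,(\mathbb{R}^d\times\mathbb{R}^d)\setminus S\,)\le\mu(\mathbb{R}^d\setminus K)+\pi_n(\{\|x-y\|>R\})<\varepsilon$ for all $n$. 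Thus $\{\pi_n\}$ is uniformly tight, and Prokhorov's theorem yields a subsequence $\pi_{n_k}$ converging weakly to some $\pi$.

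It then remains to check that $\pi$ is feasible and optimal, and here I would lean directly on the argument already given for the fixed-marginal theorem. Continuity of the projection onto the first factor gives that $\pi$ again has first marginal $\mu$. For the balancing constraint I would reformulate it, as in Remark \ref{linearconst}, through the measures $\theta_i^{(n)}:=\tfrac{d\mu_i}{d\mu}\cdot\pi_n$, whose projections onto $Y$ are exactly $B\mapsto\int\pi_n^x(B)\,d\mu_i$; the constraint is that these projections coincide for all $i,j$. Invoking Proposition 4.3.17 of \cite{Bogachev2018} — this is where the hypothesis that each $\tfrac{d\mu_i}{d\mu}$ is continuous $\mu$-a.s. enters — one gets $\tfrac{d\mu_i}{d\mu}\cdot\pi_{n_k}\to\tfrac{d\mu_i}{d\mu}\cdot\pi$ weakly, hence the $Y$-projections converge, and by uniqueness of weak limits the projections for the limit $\pi$ still coincide, so $\pi\in\Pi(\overrightarrow{\mu})$. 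Finally, since $\|x-y\|^2$ is nonnegative and lower semicontinuous the functional $\pi\mapsto\int\|x-y\|^2\,d\pi$ is weakly lower semicontinuous, whence $\int\|x-y\|^2\,d\pi\le\liminf_k\int\|x-y\|^2\,d\pi_{n_k}$ equals the infimum and $\pi$ attains it. In the write-up I would therefore present the tightness step in full, since it is the only part not covered by the earlier proof, and dispatch closedness and lower semicontinuity by reference to it.
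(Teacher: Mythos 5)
Your proposal is correct and follows essentially the same route as the paper: a minimizing sequence, tightness extracted from the coercivity of the quadratic cost together with tightness of $\mu$, Prokhorov's theorem, closedness of the constraint set via Proposition 4.3.17 of \cite{Bogachev2018} using the $\mu$-a.s.\ continuity of $\frac{d\mu_i}{d\mu}$, and lower semicontinuity of the cost functional. The only difference is cosmetic: the paper derives uniform tightness of the second marginals by a contradiction argument and then of the plans, whereas you get tightness of the plans directly by Chebyshev's inequality on $\{\|x-y\|>R\}$, which is a slightly cleaner quantitative version of the same coercivity idea.
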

    \begin{proof}

        Consider the minimizing sequence $\pi_n$. Note that the sequence of marginal distributions $\nu_n$ corresponding to the plans $\pi_n$ is uniformly tight. Indeed, if this were not the case, and for any compact set in $\mathbb{R}^d$, there was a subsequence $\nu_{n_k}$ with mass $\varepsilon$ outside of it, then by choosing $k$ large enough, the value of the cost functional could be made arbitrarily large, contradicting the fact that $\pi_n$ minimizes this functional. 

        Since the sequence $\nu_n$ is uniformly tight, the set of plans $\pi_n$ is also uniformly tight, since for any $\varepsilon>0$ one can choose the compacts $K_1$ and $K_2$ so that $\mu(K_1)>1-\varepsilon$ and $\nu_n(K_2)>1-\varepsilon$, which means $\pi_n(K_1\times K_2)>1-2\varepsilon$ for any $n$. So, according to Prokhorov's theorem, we can choose a weakly convergent subsequence $\pi_{n_k}$ to some plan $\pi^*$. Using the closure of the set $\Pi(\overrightarrow{\mu})$, which follows from $\mu$-a.s. of continuity $\frac{d\mu_i}{d\mu}$, we obtain that $\pi^*$ is the solution to the problem \ref{barycentrproblem}.
        
    \end{proof}
    

\section{Optimal mixing on discrete domain}
    Assume that we have a finite set $X=\{x_1,\dots, x_m\}\subset\mathbb{R}^d$, which is the support for each probability measure $\mu_i\in\mathcal{P}(X)$, where $1\leq i \leq n$.
    
    To begin with, let us note that if $\nu$ is a fixed discrete measure on the set $Y = \{y_1, \dots, y_l\} \subset \mathbb{R}^d$, then the problem of finding the optimal plan can be formulated as a linear programming problem. This follows from the fact that the simultaneous transport problem is a problem with linear constraints. By adding the corresponding condition, we can obtain a linear programming problem
    $$\langle\pi_{kj},c_{kj}\rangle \to \min, \quad
    \sum_i^m \pi_{kj} = \nu^j, \quad
    \sum_j^l \pi_{kj} = \mu^k, \quad
    \sum_k \langle \pi_{kj},\frac{\mu_i^k}{\mu^k} \rangle = \nu^j.
    $$

    Next, assume that $c(x,y)=||x-y||^2$. Consider the case when the measure $\nu$ is not fixed. To solve such a problem, we need to find the support of the marginal $\nu$.
    
    Let $\psi: X \to [0,1]^n$ be function such that
    \begin{equation}
        \psi : x \mapsto \frac{1}{n}\left(\frac{d\mu_1}{d\mu}(x),\dots,\frac{d\mu_n}{d\mu}(x)\right)
    \end{equation}

    Note that all points in the image are located within the $(n-1)$-dimensional simplex $\Delta_n$. Let $P$ be the image of the measure $\mu$ after applying the mapping $\psi$, and note that the barycenter of this measure coincides with the center of $\Delta_n$, since
    $$
    \int z \; dP(z) = \int  \frac{1}{n}\left(\frac{d\mu_1}{d\mu}(x),\dots,\frac{d\mu_n}{d\mu}(x)\right)\; d\mu = \left(\frac{1}{n},\dots,\frac{1}{n}\right)
    $$

    The following lemma is a well-known fact in convex geometry, and it follows from the fact that in an $(n-1)$-dimensional space, the convex hull of a set of points is the union of all possible simplices that can be constructed from any subset of $n$ points in that set.
 
    \begin{lemma}\label{lemma_1}
   There is a set of at most $n$ points from the support of  $P$ such that their convex hull contains the center of the simplex.
    \end{lemma}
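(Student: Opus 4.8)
The plan is to recognize the statement as a direct consequence of Carathéodory's theorem applied to the barycenter of $P$. First I would record that, since $X$ is finite, $P$ is a discrete probability measure whose support $\supp(P) \subseteq \psi(X)$ is a finite set of points in the simplex $\Delta_n$. Writing $\{z_1,\dots,z_r\}$ for these points and $p_k := P(\{z_k\})$, the barycenter already computed in the excerpt is literally a convex combination of them:
$$
\left(\tfrac{1}{n},\dots,\tfrac{1}{n}\right) = \int z \, dP(z) = \sum_{k=1}^{r} p_k\, z_k,
$$
so the center of the simplex lies in the convex hull of $\supp(P)$ to begin with. The content of the lemma is only that this center can be realized using few of the $z_k$.

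The key step is then a careful dimension count. Although $\supp(P) \subset \mathbb{R}^n$, every point $z \in \Delta_n$ satisfies $\sum_{j} z_j = 1$, because $\sum_i \frac{d\mu_i}{d\mu} = n$ holds $\mu$-a.s.; hence $\supp(P)$ lies in an affine subspace of dimension $n-1$. Carathéodory's theorem states that a point in the convex hull of a set contained in a $(n-1)$-dimensional affine space is a convex combination of at most $(n-1)+1 = n$ of that set's points. Applying it to the center yields nonnegative coefficients $\lambda_1,\dots,\lambda_n$ summing to $1$ and points $z_{k_1},\dots,z_{k_n} \in \supp(P)$ with $\left(\tfrac{1}{n},\dots,\tfrac{1}{n}\right) = \sum_{s=1}^{n} \lambda_s z_{k_s}$, and these at most $n$ points have the required property.

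There is essentially no serious obstacle here; the only point deserving care is ensuring the reduction is to dimension $n-1$ rather than $n$, since a naive application of Carathéodory in $\mathbb{R}^n$ would give $n+1$ points. The improvement to $n$ comes precisely from the affine constraint $\sum_j z_j = 1$ cutting the ambient dimension by one. If one prefers a self-contained argument, the same bound follows from the iteration underlying Carathéodory: whenever the center is written as a positive combination of more than $n$ points of $\supp(P)$, those points (being more than $n$ in a $(n-1)$-dimensional affine space) are affinely dependent, and such a dependence can be used to drive one coefficient to zero while preserving the convex combination; repeating this reduces the number of points used to at most $n$.
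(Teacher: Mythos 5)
Your proposal is correct and follows essentially the same route as the paper: the paper justifies the lemma by noting that in an $(n-1)$-dimensional space the convex hull of a set is the union of simplices spanned by at most $n$ of its points (i.e.\ Carath\'eodory's theorem), combined with the barycenter computation showing the center lies in the convex hull of $\supp(P)$. You have simply written out the details the paper leaves implicit, including the correct dimension count via the affine constraint $\sum_j z_j = 1$.
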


        \begin{lemma}

        The optimal mixing points for the measures $\mu_1,\dots,\mu_n$ on $X$ are the barycenters of minimal subsets $\{x_{i_k}\}^l_{k=1}$ such that $l\leq n$, and the convex hull of the image of which, when transformed by $\psi$, contains the center of the simplex $\Delta_n$, and the weights on the points $\{x_{i_k}\}^l_{k=1}$ are set according to the corresponding weights on $\{\psi(x_{i_k})\}^l_{k=1}$, so that the barycenter of these points is the center of the simplex $\Delta_n$.

        \end{lemma}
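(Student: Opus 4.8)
The plan is to disintegrate an optimal plan with respect to its target marginal $\nu$ and to show that each fiber is forced into a rigid structure. Writing $\pi = \int \pi^y \, d\nu(y)$, the cost splits as $\int \left( \int \|x-y\|^2 \, d\pi^y(x)\right) d\nu(y)$, and by the reformulation (\ref{conditionofproblem}) the feasibility constraint becomes, for $\nu$-a.e.\ $y$,
$$ \int \psi(x)\, d\pi^y(x) = \left(\tfrac1n,\dots,\tfrac1n\right), $$
i.e.\ the barycenter of the $\psi$-image of each fiber is the center of $\Delta_n$. Thus the problem decouples into choosing, for each target point, a probability measure $\pi^y$ on $X$ whose $\psi$-image is centered. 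I would first record the elementary fact that for fixed $\pi^y$ the inner integral $\int\|x-y\|^2\,d\pi^y$ is minimized by taking $y$ to be the barycenter $b(y):=\int x\,d\pi^y$; relocating each target to its barycenter preserves the constraint (which depends only on $\pi^y$) and does not increase the cost, so I may assume every target point is the barycenter of its fiber and that the fiber cost equals its variance $\int\|x-b(y)\|^2\,d\pi^y$.

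The heart of the argument is an indecomposability step. Suppose a fiber $\pi^y$ can be written as $\alpha\rho_1+(1-\alpha)\rho_2$ with $\alpha\in(0,1)$ and $\rho_1\neq\rho_2$ both centered, hence both feasible fibers. Sending the corresponding mass to the two barycenters $b(\rho_1),b(\rho_2)$ instead of to the single barycenter $b(y)=\alpha b(\rho_1)+(1-\alpha)b(\rho_2)$ changes the contribution by exactly $-\alpha\|b(\rho_1)-b(y)\|^2-(1-\alpha)\|b(\rho_2)-b(y)\|^2\le 0$ (the law of total variance), strictly so unless $b(\rho_1)=b(\rho_2)$. Hence an optimal fiber admits no nontrivial splitting into two feasible fibers with distinct barycenters; this is the precise sense in which optimal fibers are supported on \emph{minimal} subsets.

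It then remains to translate indecomposability into the geometric statement. I would show that a centered fiber supported on $\{x_{i_1},\dots,x_{i_l}\}$ is indecomposable precisely when the images $\psi(x_{i_1}),\dots,\psi(x_{i_l})$ are affinely independent and the center lies in the relative interior of their convex hull: if they were affinely dependent, a relation $\sum_k\lambda_k\psi(x_{i_k})=0$ with $\sum_k\lambda_k=0$ would let me perturb the barycentric weights $w_k\mapsto w_k\pm\varepsilon\lambda_k$ and split the fiber into two distinct centered fibers, whereas affine independence makes the barycentric coordinates of the center unique and forbids any nontrivial splitting. Affinely independent points inside the $(n-1)$-dimensional simplex number at most $n$, which yields $l\le n$ and matches Lemma \ref{lemma_1}; minimality (dropping any point destroys containment of the center) is exactly the positivity of all weights, and these are the weights prescribed in the statement. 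Existence of a feasible decomposition of $\mu$ into such minimal centered fibers follows by a Carathéodory-type extraction on the measure $P$, repeatedly peeling off minimal centered subsets supplied by Lemma \ref{lemma_1}. I expect the main obstacle to be the bookkeeping in this last step: since distinct minimal subsets may share points of $X$, the extraction must be performed at the level of the masses $\mu(\{x_j\})$ (a fractional, measure-theoretic splitting) rather than combinatorially, and one must check that the procedure terminates and exhausts $\mu$ while keeping every extracted fiber centered.
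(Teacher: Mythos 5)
Your proposal follows essentially the same route as the paper's proof: disintegrate the optimal plan over its target marginal, observe via (\ref{conditionofproblem}) that each fiber's $\psi$-image must be centered at the center of $\Delta_n$, relocate each target point to the barycenter of its fiber, and split any non-minimal fiber into minimal pieces without increasing the cost, with Lemma \ref{lemma_1} supplying the bound $l\le n$. Your write-up is in fact more explicit than the paper's at the points it leaves implicit --- the law-of-total-variance computation justifying that splitting strictly helps when barycenters differ, the identification of minimality with affine independence of the $\psi$-images, and the observation that the splitting must be performed fractionally on the masses rather than combinatorially.
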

        \begin{proof}

        Let $\pi$ be the optimal plan. Take any point $y\in\supp(\nu)$, where $\nu$ is the projection of $\pi$ on $Y$. Now consider the support $\{x_{i_k}\}^l_{k=1}$ of the conditional measure $\pi^y$. It is clear from the condition \ref{conditionofproblem} that the convex hull $\{\psi(x_{i_k})\}^l_{k=1}$ contains the center of the simplex $\Delta_n$. If the set $\{\psi(x_{i_k})\}^l_{k=1}$ is minimal (i.e. one from which it is impossible to remove one point without losing the property that the convex hull $\{\psi(x_{i_k})\}^l_{k=1}$ contains the center of the simplex), then the weights are set unambiguously and $y$ must be the barycenter $\{x_{i_k}\}^l_{k=1}$ with appropriate weights, since it is the barycenter that minimizes the functional 
        $$\mathcal{F}(y)=\int x \; d\pi^y$$.

        If the set is not minimal, then we divide it into minimal ones. And according to the lemma \ref{lemma_1}, the minimum set cannot consist of more than $n$ points. After that, we will replace the point $y$ with the barycenters of these sets, which obviously will not increase cost functional.

        \end{proof}

        Based on the above reasoning, we can more concretely formulate the problem, taking into account that from the subsets $\{x_{i_k}\}_{k=1}^l$ we can construct the support of optimal $\nu$, points of which are barycenters of these subsets. Specifically, we reduce this problem to a linear programming problem.

        That is, we need to find a measure $\nu$ on the discrete space $Y$ that, for known $\pi_y, \mu$, minimizes the integral
        $$\int ||x-y||^2\; d\pi^y d\nu,$$
        where $\pi\in\Pi(\mu,\nu)$.


\section{The equality of values in the Monge and Kantorovich problems}

In this section, we will discuss the theorem on the equality of minimum and infimum in the Kantorovich and Monge problems respectively. The proof of this theorem is based on ideas from  \cite{Bogachev2019, Pratelli2007OnTE}, where an analogous theorems was proved for a single source marginal. In the paper \cite{Wang2022SimultaneousOT} was proved an analogue of this theorem for the simultaneous transport problem for compact Polish spaces. We will provide an alternative proof for the more general case.

    To prove the main theorem, we need to use the following lemma, which is a simple consequence of Lyapunov's theorem \cite{Lyapunov1940}. 

        \begin{lemma}[Lyapunov]\label{lyapunov2}

        Let $\mu_1\dots,\mu_n$ be atomless non-negative measures on measurable space. Then the following sets are the same
        $$\{ (\mu_1(E),\dots,\mu_n(E))\; : \; E\in\mathcal{F}\} = \left\{ \left(\int g(x) \; d\mu_1\dots,\int g(x) \; d\mu_n\right)\; : \; 0\leq g(x)\leq 1 \right\},
        $$
        where $g$ is a $\mathcal{F}$-measurable function.
            
        \end{lemma}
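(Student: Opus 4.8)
The plan is to prove the two inclusions separately, treating the nontrivial direction as a direct consequence of the classical Lyapunov convexity theorem for the range of a non-atomic vector measure. Write $\vec{\mu}(E) := (\mu_1(E),\dots,\mu_n(E))$ for the associated $\mathbb{R}^n$-valued measure, and let $L$ and $R$ denote, respectively, the left-hand and right-hand sets in the statement. The inclusion $L \subseteq R$ is immediate: given $E\in\mathcal{F}$, the indicator $g = \mathbf{1}_E$ satisfies $0\leq g\leq 1$ and $\int g\,d\mu_i = \mu_i(E)$ for every $i$, so every point of $L$ is already realized by an admissible $g$.

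For the reverse inclusion $R\subseteq L$, I would first recall the content of Lyapunov's theorem: the range $\{\vec{\mu}(E):E\in\mathcal{F}\}$ of a non-atomic finite vector measure is both convex and compact, and the same holds for each restricted range $\{\vec{\mu}(E):E\subseteq A,\ E\in\mathcal{F}\}$ with $A\in\mathcal{F}$. The argument then proceeds by first settling the case of simple functions. For $g = \sum_{k=1}^{N} c_k \mathbf{1}_{A_k}$ with the $A_k$ pairwise disjoint and $0\leq c_k\leq 1$, one has $\int g\,d\vec{\mu} = \sum_k c_k\,\vec{\mu}(A_k)$. The restricted range over $A_k$ contains both $0=\vec{\mu}(\emptyset)$ and $\vec{\mu}(A_k)$, so by convexity the whole segment between them lies in that range; hence there exists $E_k\subseteq A_k$ with $\vec{\mu}(E_k)=c_k\,\vec{\mu}(A_k)$. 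Taking $E=\bigcup_k E_k$ (a disjoint union) yields $\vec{\mu}(E)=\int g\,d\vec{\mu}$, placing this point in $L$.

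To reach a general measurable $g$ with $0\leq g\leq 1$, I would approximate by simple functions $g_m$ with $0\leq g_m\leq 1$ and $g_m\to g$ pointwise, so that $\int g_m\,d\mu_i\to\int g\,d\mu_i$ for each $i$ by dominated convergence (dominating each $\mu_i$ by the finite measure $\lambda=\sum_i\mu_i$, which is itself atomless). Each $\int g_m\,d\vec{\mu}$ lies in $L$ by the simple-function case, and since $L$ is closed—being compact by Lyapunov's theorem—the limit $\int g\,d\vec{\mu}$ lies in $L$ as well. This establishes $R\subseteq L$ and hence the claimed equality.

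The main obstacle I anticipate is the passage from simple to general $g$: it relies on the closedness (compactness) of $L$, which is precisely the substantive half of Lyapunov's theorem rather than an elementary observation. Some care is also needed with the finiteness of the measures, which is why I would work relative to the dominating finite atomless measure $\lambda=\sum_i\mu_i$ to simultaneously justify the compactness of the range and the dominated-convergence approximation; the convexity step for simple functions, by contrast, is routine once the restricted ranges are known to be convex.
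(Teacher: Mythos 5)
The paper offers no proof of this lemma at all --- it is stated as a ``simple consequence of Lyapunov's theorem'' --- so there is nothing to diverge from; your argument is a correct and complete derivation by the standard route (indicators give $L\subseteq R$; convexity of the restricted ranges handles simple $g$, and compactness of the range plus dominated convergence handles the limit). Your caveat about finiteness is apt but harmless in context, since in the paper's application the $\mu_i$ are restrictions of probability measures, hence finite, so Lyapunov's convexity--compactness theorem applies exactly as you use it.
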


        \begin{theorem}

        Let $X$ and $Y$ be compact Souslin spaces and let cost function  $c: X\times Y \to [0,+\infty)$ be continuous. Let $\mu_1,\dots,\mu_n,$ and $\nu$ be atomless probability measures on $X$ and $Y$ respectively, then 
        \begin{equation}\label{mininf}
        \min_{\pi\in\Pi(\overline{\mu},\nu)} \int c(x,y)\; d\pi = \inf_{T\in\mathcal{T}(\overline{\mu},\nu)} \int c(x,T(x))\; d\mu
        \end{equation}
            
        \end{theorem}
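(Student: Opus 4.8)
The plan is to establish the two inequalities in (\ref{mininf}) separately; the bound $\min_\pi \int c\,d\pi \le \inf_T \int c(x,T(x))\,d\mu$ is immediate, while the reverse inequality rests on approximating the optimal plan by genuine transport maps.

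For the easy direction, I would observe that any $T\in\mathcal{T}(\overline{\mu},\nu)$ induces the plan $\pi_T:=(\mathrm{id}\times T)_{\#}\mu$, whose disintegration is $\pi_T^x=\delta_{T(x)}$. Then $\int_X\pi_T^x(B)\,d\mu_i=\mu_i(T^{-1}(B))=\nu(B)$ for every $i$ and every Borel $B$, so $\pi_T\in\Pi(\overline{\mu},\nu)$, and $\int c\,d\pi_T=\int c(x,T(x))\,d\mu$. Hence the Kantorovich value, which is a genuine minimum by the existence theorem above, cannot exceed the Monge value.

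For the reverse inequality, fix the optimal plan $\pi$ and $\varepsilon>0$. Using uniform continuity of $c$ on the compact product $X\times Y$, I would choose finite Borel partitions $X=\bigsqcup_\ell A_\ell$ and $Y=\bigsqcup_j B_j$ into pieces of diameter below $\varepsilon$ and fix points $y_j\in B_j$. The crux is to produce a partition $X=\bigsqcup_j E_j$ that matches \emph{all} source marginals at once, namely $\mu_i(E_j)=\nu(B_j)$ for every $i,j$, with $E_j$ concentrated where $\pi$ sends mass into $B_j$. Working cell by cell, the functions $g_j(x):=\pi^x(B_j)$ satisfy $g_j\ge 0$ and $\sum_j g_j\equiv 1$; applying the vector form of Lyapunov's theorem (Lemma \ref{lyapunov2}) to $\mu_1,\dots,\mu_n$ restricted to $A_\ell$ yields a measurable partition $A_\ell=\bigsqcup_j F_j^\ell$ with $\mu_i(F_j^\ell)=\int_{A_\ell}g_j\,d\mu_i$ for all $i,j$. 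Setting $E_j:=\bigcup_\ell F_j^\ell$ and invoking constraint (\ref{conditionofproblem}) gives $\mu_i(E_j)=\int_X\pi^x(B_j)\,d\mu_i=\nu(B_j)$, while $\mu(F_j^\ell)=\pi(A_\ell\times B_j)$ shows that collapsing $E_j$ to $y_j$ moves mass essentially as $\pi$ does, at a cost differing from $\int c\,d\pi$ by no more than the modulus of continuity of $c$ at scale $\varepsilon$.

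The remaining issue is that collapsing $E_j$ to a point pushes each $\mu_i$ only to the discrete measure $\sum_j\nu(B_j)\delta_{y_j}$, not to $\nu$ itself. To land \emph{exactly} in $\mathcal{T}(\overline{\mu},\nu)$, I would spread each layer onto its target: since $\mu_1|_{E_j},\dots,\mu_n|_{E_j}$ are atomless of common total mass $\nu(B_j)$, Lyapunov's transformation theorem provides a Borel map $S_j:E_j\to B_j$ with $\mu_i|_{E_j}\circ S_j^{-1}=\nu|_{B_j}$ for all $i$. The map $T_\varepsilon:=S_j$ on $E_j$ then satisfies $\mu_i\circ T_\varepsilon^{-1}=\sum_j\nu|_{B_j}=\nu$, so $T_\varepsilon\in\mathcal{T}(\overline{\mu},\nu)$, while $\mathrm{diam}(A_\ell)<\varepsilon$ and $\mathrm{diam}(B_j)<\varepsilon$ keep $\int c(x,T_\varepsilon(x))\,d\mu$ within $o(1)$ of $\int c\,d\pi$ as $\varepsilon\to0$, closing the gap. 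The main obstacle is precisely this simultaneous matching of all $n$ marginals: it forces the vector-valued Lyapunov theorem, rather than its scalar version, both for the cell-wise partitions and for the final spreading maps, and the delicate point is to carry out these selections in a jointly Borel-measurable fashion over the disintegration while retaining atomlessness of each $\mu_i|_{E_j}$, which does hold since every $\mu_i$ is atomless.
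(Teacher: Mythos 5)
Your proposal is correct and follows essentially the same route as the paper: partition $X\times Y$ into cells $A_\ell\times B_j$ on which $c$ oscillates by at most $\varepsilon$, apply the vector Lyapunov lemma cell by cell to the functions $g_j(x)=\pi^x(B_j)$ to produce sets whose $\mu_i$-masses equal $\int \pi^x(B_j)\,d\mu_i=\nu(B_j)$, and finish with Lyapunov's transformation theorem to spread each piece onto $\nu|_{B_j}$. The only differences are cosmetic: you spell out the easy inequality, which the paper omits, and you group the pieces $F_j^\ell$ into $E_j$ before spreading, whereas the paper defines the map on each $X_i^k$ separately.
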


        \begin{proof}
            Let us fix  $\varepsilon>0$.
            Let $\pi$ be optimal plan.
            Let us choose an open cover of the space $X\times Y$ by sets $A'_m \times B'_m$ such that 
            $$
            \sup_{(x,y),(x',y')\in A'_m\times B'_m}|c(x,y)-c(x',y')|\leq \varepsilon,
            $$
            and choose a finite subcover. Using the sets $A'_m$, we will construct a disjoint set of sets $A_k$. We will then carry out similar constructions with the sets $B'_m$ to obtain sets of disjoint sets $\{A_k\}_{i=1}^{N}$ and $\{B_k\}_{j=1}^M$. These sets are defined on corresponding spaces and have certain properties:

            1) $\sup_{(x,y),(x',y')\in A_i\times B_j}|c(x,y)-c(x',y')|\leq \varepsilon;$

            2) $(A_{i_1}\times B_{j_1})\cap (A_{i_2}\times
            B_{j_2})= \varnothing$, where  $i_1\ne i_2$ or $j_1\ne j_2$;

            3) $\pi\left(\bigcup_{i,j}^{N,M} (A_i\times B_j)\right)=1.$ 

            Now we fix one of the sets $A_i$, where $i\in\{1,\dots N\}$, and consider the functions $g_k(x):=\pi^x(B_k)\leq 1$, where $\pi^x$ is a conditional measure at the point $x$. Using the \ref{lyapunov2} lemma inductively, we obtain a partition of the set $A_i$ on the set $\{X_i^k\}_{k=1}^M$ such that 
            $$
            \left(\mu_1(X_i^k),\dots,\mu_n(X_i^k)\right) = \left(\int g_k(x) \; d\mu_1\dots,\int g_k(x) \; d\mu_n\right).
            $$

            We will repeat this process for each $A_i$. Then, we define a mapping $T_i^k(x)$ on each $X_i^k$, which translates the constraints of measures $\mu_1, \dots, \mu_n$ to the constraint of measure $\nu$ on $B_k$ multiplied by a constant $\mu_j(X_i^k)$. It is easy to see that if we define $T:X\to Y$ on $X_i^k$ as $T_i^k(x)$, it will translate each $\mu_j$ to $\nu$, and

            $$
            \int c(x,T(x))\; d\mu \leq \sum_{i,k} \int c(x,T_i^k(x))\; d\mu \leq \sum_{i,k} \left( \int_{A_i\times B_k} c(x,y)\; d \pi + \varepsilon\pi(A_i\times B_k)\right) = 
            $$
            $$
            = \int c(x,y) \; d\pi + \varepsilon. 
            $$
            Given any choice of $\varepsilon$, we obtain the  equality (\ref{mininf}).

        \end{proof}

\section{The existence of an optimal mapping}

    In this section, we will be working in the space $X = \mathbb{R}^d$, with a quadratic cost function. The classical theory of optimal transport establishes a connection between the Monge and Kantorovich problems for this type of cost function using the concepts of cyclic monotonicity and the Rockafellar theorem. To recall these important ideas and concepts, let us first define them.

\begin{definition}

 We say that  $\Gamma\subset \mathbb{R}^d\times \mathbb{R}^d$ is cyclically monotone, if for any points $\{(x_i,y_i)\}_{i=1}^n\subset\Gamma$ following equality is true 
$$ \sum_{i=1}^n||x_i-y_i||^2 \leq  \sum_{i=1}^n ||x_{i+1}-y_i||^2,$$ where $x_{n+1}:=x_1$.

\end{definition}

\begin{theorem}[Rockafellar]

Assume that $\Gamma\subset \mathbb{R}^d\times \mathbb{R}^d$ is cyclically monotone. Then there exists a convex function $\phi: X\to \mathbb{R}\cup\{+\infty\}$ such that $\Gamma$ contained in graph of subdifferential of $\phi$.

\end{theorem}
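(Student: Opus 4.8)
The plan is to give the classical explicit construction of the potential $\phi$ as a supremum of affine functions indexed by finite chains in $\Gamma$, and then to verify directly that this $\phi$ is convex and that every pair of $\Gamma$ lies in its subdifferential. The one preliminary observation I would make is that, for the quadratic cost, the cyclic monotonicity hypothesis as stated is equivalent to the usual inner-product form: expanding $\|x_i-y_i\|^2$ and $\|x_{i+1}-y_i\|^2$ and using that $\sum_i\|x_i\|^2=\sum_i\|x_{i+1}\|^2$ around a cycle, the stated inequality collapses to $\sum_{i=1}^n\langle y_i, x_{i+1}-x_i\rangle\le 0$. This reformulation is what the construction actually uses.

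Concretely, I would fix once and for all a base point $(x_0,y_0)\in\Gamma$ and define
$$\phi(x):=\sup\left\{\langle y_m, x-x_m\rangle+\sum_{i=0}^{m-1}\langle y_i, x_{i+1}-x_i\rangle\right\},$$
where the supremum runs over all $m\ge 0$ and all finite chains $(x_1,y_1),\dots,(x_m,y_m)\in\Gamma$. Each term inside the supremum is an affine function of $x$, of the form $\langle y_m,x\rangle+\text{const}$, so $\phi$ is a supremum of affine functions and is therefore automatically convex and lower semicontinuous with values in $\mathbb{R}\cup\{+\infty\}$; no separate argument for convexity is needed.

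The two substantive steps are properness and the subdifferential inclusion. For properness I would evaluate at the base point: the empty chain ($m=0$) contributes $\langle y_0, x_0-x_0\rangle=0$, so $\phi(x_0)\ge 0$, while for any nonempty chain the expression at $x=x_0$ is exactly the cyclic sum $\sum_{i=0}^m\langle y_i, x_{i+1}-x_i\rangle$ with $x_{m+1}:=x_0$, which is $\le 0$ by the reformulated cyclic monotonicity; hence $\phi(x_0)=0$ and $\phi$ is proper. For the inclusion, given any $(\bar x,\bar y)\in\Gamma$ and any $\delta>0$, I would take a chain that is $\delta$-optimal for $\phi(\bar x)$ and append to it the pair $(\bar x,\bar y)$; testing $\phi$ at an arbitrary $x$ with this lengthened chain yields $\phi(x)\ge\phi(\bar x)-\delta+\langle\bar y, x-\bar x\rangle$, and letting $\delta\to 0$ gives $\phi(x)\ge\phi(\bar x)+\langle\bar y, x-\bar x\rangle$, i.e. $\bar y\in\partial\phi(\bar x)$. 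Thus $\Gamma$ lies in the graph of $\partial\phi$.

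The main obstacle, and really the only place where the hypothesis enters, is the properness check: without cyclic monotonicity the supremum defining $\phi(x_0)$ could be $+\infty$, and it is precisely the cycle inequality in its inner-product form that forces $\phi(x_0)=0$ and prevents $\phi$ from collapsing to the constant $+\infty$. Everything else is a formal manipulation of the chain sums.
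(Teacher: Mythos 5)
Your proof is correct and is the standard Rockafellar construction (supremum of affine functions over finite chains anchored at a base point), including the right preliminary reduction of the paper's quadratic-cost form of cyclic monotonicity to the inner-product form $\sum_i\langle y_i,x_{i+1}-x_i\rangle\le 0$. The paper states this theorem without proof, citing it as a classical result, so there is nothing to compare against; your argument is the one found in the standard references and is complete.
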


\begin{lemma}
Let $\phi : \mathbb{R}^n \to \mathbb{R}\cup\{+\infty\}$ be convex function. Then, the set of points where the subdifferential of $\phi$ contains more than one element is Lebesgue negligible.
\end{lemma}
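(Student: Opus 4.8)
The plan is to reduce the statement to the almost-everywhere differentiability of $\phi$ on the interior of its effective domain, and then to invoke Rademacher's theorem. Write $D := \{x : \phi(x) < +\infty\}$ for the effective domain, which is convex. I would first dispose of the points outside $\mathrm{int}(D)$. If $\phi(x) = +\infty$, then (assuming $D \neq \varnothing$) the subdifferential inequality $\phi(z) \geq \phi(x) + \langle p, z-x\rangle$ cannot hold for any $p$, so $\partial\phi(x) = \varnothing$ there; hence the set in question is contained in $\overline{D} = \mathrm{int}(D) \cup \partial D$. Since $D$ is convex, its topological boundary $\partial D$ is Lebesgue negligible, so it suffices to control the multivalued points lying in $\mathrm{int}(D)$.

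On $\mathrm{int}(D)$ the function $\phi$ is finite and convex, and therefore locally Lipschitz by the standard local boundedness argument for convex functions. Rademacher's theorem then guarantees that $\phi$ is differentiable at Lebesgue-almost every point of $\mathrm{int}(D)$. The core of the argument is the elementary implication that differentiability forces the subdifferential to be a singleton: if $\phi$ is differentiable at $x$ and $p \in \partial\phi(x)$, then for every direction $v$ the inequality $\phi(x + tv) \geq \phi(x) + t\langle p, v\rangle$, divided by $t > 0$ and passed to the limit $t \to 0^+$, yields $\langle \nabla\phi(x), v\rangle \geq \langle p, v\rangle$ for all $v$, forcing $p = \nabla\phi(x)$. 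Consequently $\{x \in \mathrm{int}(D) : \#\partial\phi(x) > 1\}$ is contained in the set of non-differentiability points of $\phi$, which is null.

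Combining the two pieces, the set where $\partial\phi$ contains more than one element is contained in the union of $\partial D$ and the non-differentiability set inside $\mathrm{int}(D)$, both Lebesgue negligible, which proves the lemma. The main obstacle is the almost-everywhere differentiability step: it is where the genuine analytic content sits, and the cleanest route is to cite Rademacher after verifying local Lipschitzness. If one prefers a self-contained argument avoiding Rademacher, the difficulty reappears in proving directly that the non-differentiability set of a convex function is null — for instance by exploiting the monotonicity of the subdifferential correspondence together with a covering argument — but invoking Rademacher keeps the proof short.
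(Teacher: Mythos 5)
Your proof is correct. The paper itself gives no proof of this lemma --- it is quoted as a standard fact of convex analysis alongside Rockafellar's theorem --- and your argument (restrict to the interior of the effective domain after discarding the null boundary of the convex domain, use local Lipschitzness plus Rademacher to get differentiability a.e., and observe that differentiability forces the subdifferential to be the singleton $\{\nabla\phi(x)\}$) is the standard complete justification; the only point worth flagging is the degenerate case $\phi\equiv+\infty$, where the usual properness convention is needed, which you already note.
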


    In the case of problems with linear constraints, such as \ref{kantorovich}, the support of optimal plan is not cyclically monotone over the entire space.  However, in the situation where $\frac{d\mu_i}{d\mu}$ is simple functions, this property can be utilized, and it can be shown that in the region of constancy, the optimal plan will be determined by the map. To formally prove this, we need a variation lemma \cite{Zaev2015OnTM}.

        \begin{definition}
            Let $\alpha$ be non-negative measure on $X\times Y$. 
	   We say that $\alpha '$ is competitor of  
	   $\alpha$, if $\frac{d\mu_i}{d\mu}\cdot\alpha$ and       $\frac{d\mu_i}{d\mu}\cdot\alpha '$ has the same marginals for $i\in\{1,...,n\}$.
        \end{definition}

        \begin{definition}
        We say that $\Gamma \subset X\times Y$ for measurable function $c: X\times Y\to \mathbb{R}$ the set $\Gamma \subset X\times Y$ is called $c$-monotone, if for any $\alpha$ with finite support and $\text{supp}(\alpha)\subset \Gamma$ for any competitor $\alpha'$  
	$$ 
        \int c \; d\alpha \leq \int c \; d\alpha'
        $$
        \end{definition}

        \begin{definition}
		Plan $\pi\in\Pi(\overrightarrow{\mu},\nu)$ is called $c$-monotone, if there exists c-monotone $\Gamma$ such that $\pi(\Gamma)=1$.
	\end{definition}
    
    \begin{theorem}[Variational lemma]
		Let $X$ and $Y$  be Polish spaces, let function $c:X\times Y \to [0,\infty)$ be continuous, let $\mu_1,..., \mu_n,\nu$ be atomless Borel probability measures on $X$, and let $\frac{d\mu_i}{d\mu}$ be $\mu$-a.s. continuous. Let $\hat{\pi}$ be a solution of problem \ref{kantorovich}; then $\hat{\pi}$ is $c$-monotone plan.
    \end{theorem}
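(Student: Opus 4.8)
The plan is to argue from optimality by exploiting that the feasible set $\Pi(\overrightarrow{\mu},\nu)$ is stable under \emph{competitor variations}. The first step I would establish is the following stability fact: if $\alpha$ is a non-negative measure with $\alpha \le \hat{\pi}$ and $\alpha'$ is any competitor of $\alpha$, then $\pi' := \hat{\pi} - \alpha + \alpha'$ again lies in $\Pi(\overrightarrow{\mu},\nu)$. Non-negativity is immediate from $\alpha \le \hat{\pi}$. For the marginal constraints, the key observation is that $\sum_{i=1}^n \frac{d\mu_i}{d\mu} = n$ holds $\mu$-a.e. (since $\mu = \frac1n\sum_i \mu_i$); hence summing the competitor identities for $\frac{d\mu_i}{d\mu}\cdot\alpha$ and $\frac{d\mu_i}{d\mu}\cdot\alpha'$ over $i$ shows that $\alpha$ and $\alpha'$ share both their $X$- and $Y$-marginals, so $\pi'$ keeps the projections $\mu$ and $\nu$. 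The linear constraint \ref{conditionofproblem} is preserved marginal by marginal: for each $i$ the $Y$-marginal of $\frac{d\mu_i}{d\mu}\cdot\pi'$ equals that of $\frac{d\mu_i}{d\mu}\cdot\hat{\pi}$, which is $\nu$ because $\hat{\pi}$ is feasible. Since $\int c\,d\pi' = \int c\,d\hat{\pi} - \int c\,d\alpha + \int c\,d\alpha'$, optimality of $\hat{\pi}$ forces $\int c\,d\alpha \le \int c\,d\alpha'$ for every such pair $(\alpha,\alpha')$ with $\alpha \le \hat{\pi}$.

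The remaining, and harder, task is to upgrade this inequality --- which so far holds only for pieces $\alpha$ dominated by $\hat{\pi}$ --- to the combinatorial $c$-monotonicity on a single Borel set $\Gamma$ with $\hat{\pi}(\Gamma)=1$, where $\alpha$ ranges over \emph{finitely supported} measures on $\Gamma$ that need not be dominated by $\hat{\pi}$. I would follow the Schachermayer--Teichmann / Zaev template of showing that ``bad configurations have measure zero''. For each $m\ge 1$ and $k\ge 1$, consider the set $R_{m,k}\subset (X\times Y)^m$ of $m$-point configurations drawn from $\supp\hat{\pi}$ that admit a weight-respecting competitor redistribution cheaper by at least $1/k$. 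The claim to prove is $\hat{\pi}^{\otimes m}(R_{m,k}) = 0$ for all $m,k$. If some $R_{m,k}$ had positive measure, I would pass to a compact subset on which the cost gain is uniformly bounded below, and then \emph{de-atomize}: replace each point mass in the improving configuration by a genuine small sub-piece of $\hat{\pi}$ drawn from a neighborhood, using continuity of $c$ to preserve the strict cost gain and Lemma \ref{lyapunov2} (together with continuity of the densities $\frac{d\mu_i}{d\mu}$) to adjust these sub-pieces so that the resulting $\alpha,\alpha'$ \emph{exactly} match all $n$ weighted marginals. This would yield a feasible $\pi'=\hat{\pi}-\alpha+\alpha'$ with strictly smaller cost, contradicting the first step.

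Finally, from $\hat{\pi}^{\otimes m}(R_{m,k})=0$ for all $m,k$ I would extract the single $c$-monotone carrier. Intersecting the complements and using a Fubini/projection argument --- legitimate since $X\times Y$ is Polish and $\hat{\pi}$ is Radon --- produces a Borel set $\Gamma$ of full $\hat{\pi}$-measure such that no finite configuration taken from $\Gamma$ admits a strictly cheaper competitor; by continuity this passes to non-strict competitors, so $\Gamma$ is $c$-monotone and $\hat{\pi}(\Gamma)=1$, i.e. $\hat{\pi}$ is a $c$-monotone plan.

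I expect the de-atomization step of the second paragraph to be the main obstacle. In the unconstrained Monge--Kantorovich problem one simply permutes the $y$-coordinates among fixed $x$-coordinates, which automatically preserves both marginals; here the competitor must preserve all $n$ weighted marginals simultaneously, so the admissible redistributions are tied to the weight vectors $\psi(x)=\frac1n\left(\frac{d\mu_1}{d\mu}(x),\dots,\frac{d\mu_n}{d\mu}(x)\right)$. Matching these weighted marginals \emph{exactly} while replacing atoms by atomless sub-pieces of $\hat{\pi}$ is precisely where the continuity hypothesis on $\frac{d\mu_i}{d\mu}$ and Lemma \ref{lyapunov2} do the essential work, and it is the step requiring the most care.
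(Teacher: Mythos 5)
First, a point of reference: the paper itself does not prove this theorem --- it is stated as an imported result, with the proof attributed to \cite{Zaev2015OnTM}. So there is no in-paper argument to compare against; your proposal has to be judged on its own merits, and it follows what is essentially the standard Zaev / Schachermayer--Teichmann route. Your first step is correct and complete: the observation that $\sum_i \frac{d\mu_i}{d\mu} = n$ $\mu$-a.e.\ forces any competitor $\alpha'$ of $\alpha$ to share both plain marginals with $\alpha$, so $\pi' = \hat{\pi} - \alpha + \alpha'$ stays in $\Pi(\overrightarrow{\mu},\nu)$ whenever $\alpha \le \hat{\pi}$, and optimality then gives $\int c\,d\alpha \le \int c\,d\alpha'$ for all dominated pieces. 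That part I would accept as written.

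The genuine gap is exactly where you locate it: the passage from "no dominated piece of $\hat{\pi}$ can be improved" to "no \emph{finitely supported} configuration on a full-measure set $\Gamma$ can be improved." Two things there are asserted rather than proved. (i) The de-atomization: replacing each atom $(x_j,y_j)$ of an improving configuration by a small sub-piece of $\hat{\pi}$ changes the weight vectors $\frac{1}{n}\bigl(\frac{d\mu_1}{d\mu}(x),\dots,\frac{d\mu_n}{d\mu}(x)\bigr)$ continuously but not identically, so the redistributed measure $\alpha'$ must be re-engineered to match all $n$ weighted marginals of the thickened $\alpha$ \emph{exactly}, not just approximately; invoking Lemma \ref{lyapunov2} and continuity of the densities is the right idea, but the actual construction (cutting pieces with a prescribed vector of weighted masses and verifying the cost gain survives the perturbation) is the entire technical content of the variational lemma and is missing. (ii) The measurability of the sets $R_{m,k}$ of improvable configurations is nontrivial --- they are defined by an existential quantifier over competitors, so one needs an analytic-set / measurable-projection argument before $\hat{\pi}^{\otimes m}(R_{m,k})=0$ even makes sense, and a further selection argument to extract the Borel carrier $\Gamma$. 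As submitted, the proposal is a correct and well-organized plan whose two load-bearing steps are named but not executed; to stand as a proof it would need either these steps carried out or an explicit reduction to the corresponding statements in \cite{Zaev2015OnTM}.
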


    Now, we will show that in a special case, we can relate the solutions of the Monge \ref{Monge} and Kantorovich \ref{kantorovich} problems.
        
    \begin{theorem}\label{forsimple}
            
    Let $X=Y=\mathbb{R}^d$  and $c(x,y)=||x-y||^2$. Assume that functions $\frac{d\mu_i}{d\mu}$ are simple and continuous $\lambda$-a.e., where $\lambda$ is Lebesgue measure. Then solution of the problem \ref{kantorovich} supported on a graph of some function, and the solution is unique w.r.t. $\lambda$.
            
    \end{theorem}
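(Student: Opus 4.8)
The plan is to combine the Variational lemma with Rockafellar's theorem, exploiting the \emph{simplicity} of the densities $\frac{d\mu_i}{d\mu}$. Because these functions are simple and $\lambda$-a.e.\ continuous, the space $X$ decomposes, up to a $\lambda$-null set, into finitely many level sets $X_1,\dots,X_R$ on each of which the vector $\bigl(\frac{d\mu_1}{d\mu},\dots,\frac{d\mu_n}{d\mu}\bigr)$ equals a constant $v_r$. First I would apply the Variational lemma to the optimal plan $\hat\pi$ of problem~\ref{kantorovich}, obtaining a $c$-monotone set $\Gamma$ with $\hat\pi(\Gamma)=1$, and set $\Gamma_r:=\Gamma\cap(X_r\times Y)$.

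The heart of the argument is to upgrade the constrained ($c$-)monotonicity of $\Gamma$ to \emph{ordinary} cyclic monotonicity of each piece $\Gamma_r$. Given finitely many points $(x_1,y_1),\dots,(x_k,y_k)\in\Gamma_r$ and a cyclic permutation $\sigma$ of $\{1,\dots,k\}$, put $\alpha=\sum_j\delta_{(x_j,y_j)}$ and $\alpha'=\sum_j\delta_{(x_{\sigma(j)},y_j)}$. Since every $x_j$ lies in the single level set $X_r$, the densities are constant there, so $\frac{d\mu_i}{d\mu}\cdot\alpha=(v_r)_i\,\alpha$ and $\frac{d\mu_i}{d\mu}\cdot\alpha'=(v_r)_i\,\alpha'$; as $\sigma$ only permutes first coordinates within $X_r$, these two weighted measures have identical $X$- and $Y$-marginals for every $i$. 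Hence $\alpha'$ is an admissible competitor of $\alpha$, and $c$-monotonicity gives $\sum_j\|x_j-y_j\|^2\le\sum_j\|x_{\sigma(j)}-y_j\|^2$, which is exactly classical cyclic monotonicity of $\Gamma_r$. I expect this step to be the main obstacle: verifying that a cyclic rearrangement confined to one level set is a legitimate competitor under the stated definition is the crux, and it is precisely where simplicity is indispensable, since on a level set the weights $\frac{d\mu_i}{d\mu}$ factor out as constants and so cannot disturb the marginal constraints.

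With cyclic monotonicity in hand, Rockafellar's theorem yields a convex $\phi_r$ with $\Gamma_r\subset\partial\phi_r$, and the lemma on single-valuedness of the subdifferential shows $\partial\phi_r$ is a singleton off a $\lambda$-null set. Gluing the pieces, the map $T$ defined by $T:=\nabla\phi_r$ on $X_r$ is well defined $\lambda$-a.e., and $\Gamma$ is contained in the graph of $T$ outside a set whose $X$-projection is $\lambda$-null; since the first marginal $\mu\ll\lambda$ (implicit in the $\lambda$-a.e.\ formulation), this projection is also $\mu$-null, so $\hat\pi$ is concentrated on $\{y=T(x)\}$, i.e.\ supported on a graph.

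For uniqueness I would use convexity. By Remark~\ref{linearconst} the feasible set $\Pi(\overrightarrow{\mu},\nu)$ is convex and the cost is linear, so if $\pi_1,\pi_2$ are both optimal then $\pi:=\tfrac12(\pi_1+\pi_2)$ is optimal, hence $c$-monotone by the Variational lemma, hence supported on a graph $G$ of some map by the argument above. From $\pi(G^c)=0$ and non-negativity we get $\pi_1(G^c)=\pi_2(G^c)=0$, so both $\pi_i$ are concentrated on $G$; since each has first marginal $\mu$, both coincide with the pushforward $(\mathrm{id},T)_\#\mu$, giving $\pi_1=\pi_2$. The remaining steps beyond the competitor verification are just the classical Brenier--Rockafellar machinery applied level set by level set.
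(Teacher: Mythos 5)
Your proposal is correct and follows essentially the same route as the paper: restrict to the level sets of the simple densities, observe that a cyclic rearrangement within one level set is a legitimate competitor because the constant weights factor out of the marginal constraints, and then apply Rockafellar's theorem together with the $\lambda$-a.e.\ single-valuedness of the subdifferential. You actually spell out the competitor verification more explicitly than the paper does, and you supply a uniqueness argument (via the midpoint plan $\tfrac12(\pi_1+\pi_2)$) that the paper's proof asserts in the statement but does not write out.
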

    \begin{proof}

        Indeed, let the measure $\pi$ be the solution to the problem \ref{kantorovich}. The space $X$ can be divided into sets $\{A_k\}_{k=1}^{\infty}$, where each $\frac{d\mu_i}{d\mu}$ is constant. The sets $\{A_k\}_{k=1}^{\infty}$ can be considered closed, since the functions $\frac{d\mu_i}{d\mu}$ are continuous $\lambda$-a.e. 

        Now, note that the support of $\pi$ on the set $A_k\times\mathbb{R}^d$ is cyclically monotone, since by virtue of the variational lemma of the competitor of the measure $\alpha'$ for the measure can be chosen as in the classical case. Since at constant $\frac{d\mu_i}{d\mu}$ while preserving marginals $\pi$, marginals of measures $\frac{d\mu_i}{d\mu}\pi$ are also preserved.
 
        It remains to use Rockafellar's theorem and the fact that the subdifferential of the convex function $\lambda$-a.e. consists of one element.
        Thus, $\pi$ is supported on the graph of some mapping and is the solution to the Kantorovich problem, so this mapping is the solution to the Monge problem.

    \end{proof}

    \begin{proposition}\label{monotoneforline}
        Let  $X=Y=\mathbb{R}$, then it follows from variational lemma that barycentric function $$g(y)=\int x \; d\pi^y$$ 
        is non-decreasing, where $\pi $ is solution of the problem \ref{kantorovich}. 
    \end{proposition}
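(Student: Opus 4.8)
The plan is to reduce the statement to a one-dimensional rearrangement inequality by rewriting the quadratic cost in terms of the barycentric function $g$. First I would disintegrate $\pi$ with respect to its second marginal $\nu$ and expand the square: since $X=Y=\mathbb{R}$ and $c(x,y)=||x-y||^2=(x-y)^2$,
\begin{equation}
\int (x-y)^2 \, d\pi = \int x^2 \, d\mu + \int y^2 \, d\nu - 2\int y \Big( \int x \, d\pi^y \Big) d\nu(y) = \int x^2 \, d\mu + \int y^2 \, d\nu - 2\int y\, g(y)\, d\nu(y).
\end{equation}
The first two terms depend only on the fixed marginals $\mu$ and $\nu$, so $\pi$ is optimal for problem \ref{kantorovich} if and only if it maximizes $\int y\, g(y)\, d\nu(y)$ among admissible plans.

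The crucial observation is that the constraint \ref{conditionofproblem} reads $\int \frac{d\mu_i}{d\mu}\, d\pi^y = 1$ for $\nu$-almost every $y$, i.e. the same constant value $1$ is imposed on every fibre. Consequently, rearranging the conditional measures $\pi^y$ among different base points preserves admissibility: if $\sigma:Y\to Y$ is any $\nu$-preserving Borel map and we set $\tilde\pi^y:=\pi^{\sigma(y)}$, then $\tilde\pi$ still has first marginal $\int \pi^{\sigma(y)}\, d\nu(y)=\mu$, second marginal $\nu$, and satisfies $\int \frac{d\mu_i}{d\mu}\, d\tilde\pi^y = 1$ for all $i$, so every such $\tilde\pi$ is a competitor of $\pi$ in the sense of the variational lemma. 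Applying the variational lemma to the exchange of the conditional measures over two base points $y_1<y_2$ (formally, to a finitely supported $\alpha$ approximating the restriction of $\pi$ to small neighbourhoods of $y_1$ and $y_2$) would yield
\begin{equation}
y_1\, g(y_1) + y_2\, g(y_2) \ge y_1\, g(y_2) + y_2\, g(y_1),
\end{equation}
which rearranges to $(y_2-y_1)\big(g(y_2)-g(y_1)\big)\ge 0$. Since $y_2>y_1$, this gives $g(y_2)\ge g(y_1)$, so $g$ is non-decreasing.

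Equivalently, maximizing $\int y\, g(y)\, d\nu(y)$ over all rearrangements $g\circ\sigma$ is, by the Hardy--Littlewood rearrangement inequality, attained exactly when $g$ is comonotone with the identity, and optimality of $\pi$ then forces $g$ to coincide $\nu$-a.e. with its non-decreasing rearrangement. The hard part will be the rigorous realization of this conditional-exchange competitor inside the finite-support framework of the variational lemma: one must approximate $\pi$ over neighbourhoods of $y_1,y_2$ by finitely supported measures, check that the swapped measure is a genuine competitor (the constraints survive precisely because their common value is the constant $1$, independent of $y$), and then pass to the limit while controlling null sets and ties, using that $\nu$ is atomless to guarantee existence of the required $\nu$-preserving rearrangements.
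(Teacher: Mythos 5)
Your proposal is correct and follows essentially the same route as the paper: both swap the conditional measures over two base points $y_1<y_2$, observe that this exchange is a legitimate competitor because the constraint $\int \frac{d\mu_i}{d\mu}\,d\pi^y=1$ takes the same constant value on every fibre, and conclude via the two-point rearrangement inequality $(y_2-y_1)(g(y_2)-g(y_1))\ge 0$. The only differences are cosmetic — you argue directly from optimality after expanding the square, while the paper argues by contradiction with a finitely supported $\alpha$ — and both leave the finite-support approximation of the swapped competitor at the same level of detail.
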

    \begin{proof}

    Assume the contrary. Then there are $y_1 < y_2$ such that  
    $$
    g(y_1) = \int x \; d\pi^{y_1} > \int x \; d\pi^{y_2} = g(y_2).
    $$

    Let us take as $\alpha$ a discrete measure whose support is contained in the support $\pi$ and such that it is concentrated at points with the second coordinate $y_1$ or $y_2$ such that
    $$
    \int x \; d\alpha^{y_i} = \int x \; d\pi^{y_i},
    $$
    and 
    $$
    \int \frac{d\mu_k}{d\mu} \;d\alpha^{y_i} = 1,
    $$
    where $i\in\{1,2\}$, $k\in\{1,\dots,n\}$. It is easy to see that such a measure always exists. For simplicity, assume that $Y$-marginal of $\alpha$ is uniform. Now we can define its competitor $\alpha'$ such that it has the same $Y$-marginal, but the conditional measures are rearranged. 
    
    It remains to add that
    $$
    g(y_1)y_1 + g(y_2)y_2 >  g(y_2)y_1 + g(y_1)y_2,
    $$
    that is
    $$
    \int xy \; d\alpha' > \int xy \; d\alpha,
    $$
    hence
    $$
    \int |x-y|^2 \; d\alpha' < \int |x-y|^2 \; d\alpha.
    $$    
        
    \end{proof}
  
    Thus, using the proposition \ref{monotoneforline}  and the theorem \ref{forsimple}, one can explicitly construct a solution to the \ref{kantorovich} problem for simple $\frac{d\mu_i}{d\mu}$ on real line, each time choosing the smallest possible value of the barycentric function.

\end{document}